\newtheorem{thm}{Theorem}[section]
\newtheorem{lem}[thm]{Lemma}
\theoremstyle{definition}
\theoremstyle{remark}
\numberwithin{equation}{section}
\begin{document}

\title[]
{Li-Yorke chaos for composition operators on Orlicz spaces }

\author{\sc\bf Y. Estaremi}
\address{\sc Y. Estaremi}
\email{y.estaremi@gu.ac.ir}

\address{Department of Mathematics, Faculty of Sciences, Golestan University, Gorgan, Iran.}
\thanks{}

\thanks{}

\subjclass[2020]{47A16, 47B33}

\keywords{Composition operator, Li-Yorke chaos, Orlicz space.}

\date{}

\dedicatory{}

\commby{}

\begin{abstract}
In this paper we characterize Li-Yorke chaotic composition operators on Orlicz spaces. Indeed some necessary and sufficient conditions are provided for Li-Yorke chaotic composition operator $C_{\varphi}$ on the Orlicz space $L^{\Phi}(\mu)$. In some cases we have equivalent conditions for composition operators on Orlicz spaces to be Li-Yorke chaotic. The results of this paper extend similar results in $L^p$-spaces. Our results are essentially based on the results of \cite{bdp}.
\end{abstract}

\maketitle

\section{ \sc\bf Introduction and Preliminaries}
First of all, for the convenience of the reader, we recall some basic properties of Orlicz spaces that we use in the sequel. For more details on Orlicz spaces,  see \cite{kr,raor}.

A function $\Phi:\mathbb{R}\rightarrow [0,\infty]$ is called a \textit{Young's function}  if $\Phi$ is   convex, $\Phi(-x)=\Phi(x)$, $\Phi(0)=0$ and $\lim_{x\rightarrow \infty} \Phi(x)=+\infty$. With each \textit{Young's function} $\Phi$, one can associate another convex function $\Psi:\mathbb{R} \rightarrow [0, \infty]$ which is called complementary function to $\Phi$, and has the same properties of it, which is defined as follows:

$$\Psi(y)=\sup\{x\mid y\mid-\Phi(x): x\geq0\}, \ \ \ \ \ y\in \mathbb{R}.$$

It is immediately follows from the definition that $\Psi$ is also a Young's function since $\Psi(0)=0$, $\Psi(-y)=\Psi(y)$ and $\Psi(.)$ is a convex increasing function satisfying $\lim_{y\rightarrow \infty} \Psi(y)=+\infty$. Also, it is clear that the pair $(\Phi, \Psi)$ satisfies \textit{Young's inequality}:

$$xy\leq \Phi(x)+\Phi(y), \ \ \ \ \ \ \ \ x,y\in \mathbb{R}.$$

The generalized inverse of the \textit{Young's function} $\Phi$ is defined by
$$
\Phi^{-1}(y)=\inf \{ x\geq 0: \Phi(x)> y\} \quad (y\in [0,\infty)).
$$
Notice that if $x\geq0$, then $\Phi\big(\Phi^{-1}(x)\big)\leq x$,
and if $\Phi(x)<\infty$, we also have $x\leq\Phi^{-1}\big(\Phi(x)\big)$. There are equalities in either case when $\Phi$ is a \textit{Young's function} vanishing only at zero and taking only finite values.\\

An especially useful \textit{nice Young's function} $\Phi$, called an $N$-\textit{function}, is such that $\Phi(x)=0$ iff $x=0$, $\lim_{x\rightarrow 0}\frac{\Phi(x)}{x}=0$ and $\lim_{x\rightarrow \infty}\frac{\Phi(x)}{x}=+\infty$, while $\Phi(\mathbb{R})\subset \mathbb{R}^{+}$. Moreover, the complementary function to an $N$-function is again an $N$-function.\\

Let $\Phi$ be a \textit{Young's function}. Then we say $\Phi$ satisfies the
$\Delta_{2}$-condition, if $\Phi(2x)\leq
K\Phi(x) \; ( x\geq x_{0})$  for some constants
$K>0$ and $x_0>0$. Also, $\Phi$ satisfies the
$\Delta_{2}$-condition globally if $\Phi(2x)\leq
K\Phi(x) \; ( x\geq 0)$  for some
$K>0$.

The \textit{Young's function} $\Phi$ is said to satisfy the
$\Delta'$-condition (respectively, the $\nabla'$-condition), if there exist  $ d>0$
(respectively, $b>0$) and $x_0>0$ such that
$$
\Phi(xy)\leq d\,\Phi(x)\Phi(y) \quad (x,y\geq x_{0})
$$
$$
(\mbox{respectively, }  \Phi(bxy)\geq \Phi(x)\Phi(y) \quad ( x,y\geq x_{0})).
$$
If $x_{0}=0$, these conditions are said to hold
globally. Notice that if $\Phi\in \Delta'$, then  $\Phi\in
\Delta_{2}$.\\

For a given complete $\sigma$-finite measure space$(X,\mathcal{F},\mu)$, let $L^0(\mathcal{F})$  be the linear space of  equivalence classes of $\mathcal{F}$-measurable real-valued functions on $X$. For a given \textit{Young's function}  $\Phi$, the space
$$
L^{\Phi}(\mu)=\left\{f\in L^0(\mathcal{F}):\exists k>0,
\int_X\Phi(kf)d\mu<\infty\right\}
$$
is called an Orlicz space. Define the functional

$$N_{\Phi}(f)=\inf \{k>0:\int_{X}\Phi(\frac{f}{k})d\mu\leq 1\}.$$

The functional $N_{\Phi}(.)$ is a norm on $L^{\Phi}(\mu)$ and is called \textit{guage norm}(or Luxemburge norm). Also, $(L^{\Phi}(\mu), N_{\Phi}(.))$  is a normed linear space. If almost everywhere equal functions are identified, then $(L^{\Phi}(\mu), N_{\Phi}(.))$ is a Banach space, the basic measure space $(X,\mathcal{F},\mu)$ is unrestricted. Hence every element of $L^{\Phi}(\mu)$ is a class of measurable functions that are almost everywhere equal. Also, there is another norm on $L^{\Phi}(\mu)$, defined as follows:

$$\|f\|_{\Phi}=\sup\{\int_{X}\mid fg\mid d\mu: g\in B_{\Psi}\}=\sup\{\mid\int_{X}fg d\mu\mid: g\in B_{\Psi}\},$$
in which $B_{\Psi}=\{g\in L^{\Psi}(\mu): \int_{X}\Psi(\mid g\mid )d\mu\leq 1\}$.
  The norm $\|.\|_{\Phi}$ is called \textit{Orlicz norm}. For any $f\in L^{\Phi}(\mu)$, $\Phi$ being a Young function, we have
  $$N_{\Phi}(f)\leq \|f\|_{\Phi}\leq2N_{\Phi}(f).$$
  And also for every $F\in\mathcal{F}$ with $0<\mu(F)<\infty$, we have $N_{\Phi}(\chi_F)=\frac{1}{\Phi^{-1}(\frac{1}{\mu(F)})}$.\\

Throughout this paper, we denote by $(X,\mathcal{F},\mu)$, a measure space, that is, $X$ is a nonempty set, $\mathcal{F}$ is a sigma algebra on $X$ and $\mu$ is a positive measure on $\mathcal{F}$. Also, we assume that $\varphi:X\rightarrow X$ is a non-singular measurable transformation, that is, $\varphi^{-1}(F)\in \mathcal{F}$, for every $F\in \mathcal{F}$ and $\mu(\varphi^{-1}(F))=0$, if $\mu(F)=0$. Moreover, if there exists a positive constant $c$ for which\\
$$\mu(\varphi^{-1}(F))\leq c\mu(F), \ \ \ \ \text{for every} F\in \mathcal{F},$$
then the linear operator
$$C_{\varphi}:L^{\Phi}(\mu)\rightarrow L^{\Phi}(\mu), \ \  \ \ \ \ C_{\varphi}(f)=f\circ\varphi,$$
is well-defined and continuous on the Orlicz space $L^{\Phi}(\mu)$ and is called composition operator. For more details on composition operators on Orlicz spaces one can refer to \cite{chkm}.

In this paper we shall characterize Li-Yorke chaotic composition operators on Orlicz spaces. Indeed we provide some equivalent conditions for composition operators to be Li-Yorke chaotic.\\
Now we recall that a continuous map $f:\mathcal{M}\rightarrow \mathcal{M}$ on the metric space $(\mathcal{M},d)$ is called Li-Yorke chaotic if there exists an uncountable set $S\subseteq \mathcal{M}$ such that each pair $(x,y)\in S\times S$, such that $x\neq y$, is  a Li-Yorke pair for $f$, that is\\
$$\lim_{n\rightarrow \infty} \inf d(f^n(x),f^n(y))=0, \ \ \ \ \text{and} \ \ \ \lim_{n\rightarrow \infty} \inf d(f^n(x),f^n(y))>0.$$
The set $S$ is called scrambled set for $f$. If the scrambled set $S$ is dense (respectively, residual) in $\mathcal{M}$, then $f$ is said to be densely (respectively, generically) Li-Yorke chaotic. Characterizations for dense Li-Yorke chaos and generic Li-Yorke chaos were investigated in \cite{bbmp1}. Li-Yorke chaotic linear operators are studied in \cite{bbmp, bbmp1}. Specially, recently, Li-Yorke chaotic composition operators on $L^p$-spaces investigated in \cite{bdp}. Also, in \cite{bdp} the authors provided a necessary and sufficient condition on $\varphi$ for $C_{\varphi}$ to be topologically transitive or topologically mixing on $L^p(\mu)$. The reader can refer to \cite{bbmp2,bm,bm1,bcdmp,bmpp,gmm,m} and references therein, for a wide view of the linear dynamics.

\section{\sc\bf Li-Yorke chaotic composition operators}
In this section we recall some results about Li-Yorke chaotic continuous linear operators on Banach spaces \cite{bbmp,bbmp1}.\\
\begin{thm}\label{t1.1}
Let $T$ be a continuous linear operator on the Banach space $X$. Then the followings are equivalent:\\
\begin{itemize}
  \item $T$ is Li-Yorke chaotic;

  \item $T$ has a semi-irregular vector in $X$, in other words there exists $x\in X$ such that
  $$\lim_{n\rightarrow \infty}\inf \|T^nx\|=0 \ \ \text{and} \ \ \lim_{n\rightarrow \infty}\sup\|T^nx\|>0;$$

  \item $T$ has an irregular vector in $X$, in other words, there is a vector $x\in X$ such that\\
  $$\lim_{n\rightarrow \infty}\inf\|T^nx\|=0 \ \ \text{and} \ \ \lim_{n\rightarrow \infty}\sup\|T^nx\|=\infty.$$
\end{itemize}
\end{thm}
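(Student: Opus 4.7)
The plan is to establish $(3)\Rightarrow(2)$ trivially, then the easy equivalence $(1)\Leftrightarrow(2)$, and finally the main implication $(2)\Rightarrow(3)$. For $(1)\Rightarrow(2)$, I would pick a Li-Yorke pair $(x,y)$ with $x\neq y$ from the scrambled set of $T$; by linearity $T^n(x-y)=T^n x-T^n y$, so the nonzero vector $z:=x-y$ inherits $\liminf_n\|T^n z\|=0$ and $\limsup_n\|T^n z\|>0$, which is exactly semi-irregularity. Conversely, for $(2)\Rightarrow(1)$, if $x$ is semi-irregular then $S:=\{\alpha x:\alpha\in\Real\setminus\{0\}\}$ is an uncountable scrambled set, since for distinct $\alpha,\beta$ one has $\|T^n(\alpha x)-T^n(\beta x)\|=|\alpha-\beta|\,\|T^n x\|$, whose liminf is zero and limsup is positive.

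The main step is $(2)\Rightarrow(3)$. I would first dispose of the trivial sub-case $\limsup_n\|T^n x\|=\infty$ (in which $x$ is itself irregular) and reduce to $0<\limsup_n\|T^n x\|=:\alpha<\infty$. I would then extract strictly increasing sequences $(p_k),(q_k)\subseteq\mathbb{N}$ with $\|T^{p_k}x\|\to 0$ as rapidly as desired and $\|T^{q_k}x\|\geq\alpha/2$, and build the candidate irregular vector as a series
$$
y=\sum_{k=1}^{\infty}c_k\,T^{n_k}x,
$$
with shifts $n_k\uparrow\infty$ and scalars $c_k$ chosen inductively so that (i) the series converges in $X$, (ii) at certain observation times $m$ the orbit $T^m y$ is dominated by a single term of the form $c_j\,T^{m+n_j}x$ with $m+n_j=p_{j'}$, forcing $\liminf_m\|T^m y\|=0$, and (iii) at complementary times some term is of size at least $c_j\alpha/2$ with $c_j\to\infty$, forcing $\limsup_m\|T^m y\|=\infty$.

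The hard part will be the inductive synchronization of the shifts $n_k$ and weights $c_k$: at each stage the new term must be large enough to push the limsup of the orbit up along the $q_j$-times but small enough that it neither spoils the smallness of $T^m y$ at previously fixed $p_j$-times nor destroys convergence of the series. Continuity of $T$ and completeness of $X$ enter precisely here in order to control tail estimates uniformly in the orbit. This is the standard Baire-type series trick used in \cite{bbmp,bbmp1} to upgrade a semi-irregular vector to an irregular one.
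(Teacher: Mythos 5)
First, note that the paper does not actually prove this theorem: it is quoted verbatim from the references \cite{bbmp,bbmp1} (it is Theorem~5 / Proposition~5 of Bernardes--Bonilla--M\"uller--Peris), so there is no in-paper proof to compare against. Your treatment of $(3)\Rightarrow(2)$ and of $(1)\Leftrightarrow(2)$ is correct and is the standard argument: the difference of a Li--Yorke pair is semi-irregular, and $\{\alpha x:\alpha\neq 0\}$ is an uncountable scrambled set when $x$ is semi-irregular.

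The problem is $(2)\Rightarrow(3)$, which is the entire content of the theorem, and your plan stops exactly where the proof has to begin. Two concrete issues. (a) The announced mechanism for $\limsup_m\|T^m y\|=\infty$ --- ``some term is of size at least $c_j\alpha/2$ with $c_j\to\infty$'' --- does not by itself give anything: in $T^m y=\sum_k c_k T^{m+n_k}x$ the tail $k>j$ carries the \emph{larger} weights $c_k$, and since each $\|T^{m+n_k}x\|$ is only bounded by $M_0:=\sup_n\|T^nx\|$, the tail is not even summable a priori. To close this you must (i) observe that in the reduced case the whole orbit of $x$ is bounded, so the finitely many earlier terms contribute at most $M_0\sum_{j'<j}c_{j'}$, which $c_j\alpha/2$ can be chosen to beat; and (ii) control the tail by imposing, at each later stage $k$, the finitely many conditions $c_k\|T^{s+n_k}x\|\le 2^{-k}$ for all previously fixed observation times $s$ --- satisfiable because $\|T^{s+n}x\|\le\|T^s\|\,\|T^nx\|$ and $\liminf_n\|T^nx\|=0$. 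None of this is in your plan, and without it the construction does not force $\limsup=\infty$ (nor $\liminf=0$, for the same tail reason). (b) You should be aware that the reference proves this implication in a few lines without any series: let $Y=\overline{\operatorname{span}}\,\mathrm{Orb}(x,T)$; the set $\{z\in Y:\liminf_n\|T^nz\|=0\}$ is a dense $G_\delta$ in $Y$ (it contains the linear span of the orbit, along the subsequence where $T^{n_k}x\to0$); and $\sup_n\|T^n|_Y\|=\infty$ must hold, since otherwise $\|T^mx\|\le\sup_n\|T^n|_Y\|\cdot\|T^{n_k}x\|\to0$ would contradict $\limsup_n\|T^nx\|>0$; Banach--Steinhaus then makes $\{z\in Y:\sup_n\|T^nz\|=\infty\}$ residual in $Y$, and any point of the intersection is irregular. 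Either route works, but as written your proposal has a genuine gap at its central step.
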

The following theorem gives necessary and sufficient conditions for composition operators on the Orlicz space $L^{\Phi}(\mu)$ to be Li-Yorke chaotic.

\begin{thm}
Let $C_{\varphi}$ be a continuous composition operator on the Orlicz space $L^{\Phi}(\mu)$. The $C_{\varphi}$ on the Orlicz space $L^{\Phi}(\mu)$ is Li-Yorke chaotic if and only if  there exist a non-empty countable collection of measurable sets $\{F_i\}_{i\in I}$ , with $0<\mu(F_i)<\infty$ and an increasing sequence $\{\beta_j\}_{j\in \mathbb{N}}\subseteq \mathbb{N}$ such that:\\
(I) $$\lim_{j\rightarrow \infty}\Phi^{-1}(\frac{1}{\mu(\varphi^{-\beta_j}(F_i))})=\infty, \ \ \ \text{for all} \ \ \ i\in I,$$\\
(II) $$\sup\{\frac{\Phi^{-1}(\frac{1}{\mu(F_i)})}{\Phi^{-1}(\frac{1}{\mu(\varphi^{-n}(F_i))})}\: i\in I, \ \ n\in \mathbb{N}\}=\infty.$$

\end{thm}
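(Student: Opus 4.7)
The overall strategy is to invoke Theorem \ref{t1.1}, which identifies Li-Yorke chaos of $C_\varphi$ with the existence of a semi-irregular (equivalently, irregular) vector in $L^\Phi(\mu)$, and then to translate this dynamical condition into the measure-theoretic conditions (I), (II) via the fundamental identity
\[
N_\Phi(C_\varphi^n \chi_F)=N_\Phi(\chi_{\varphi^{-n}(F)})=\frac{1}{\Phi^{-1}(1/\mu(\varphi^{-n}(F)))}
\qquad (0<\mu(F)<\infty),
\]
together with the monotonicity of the Luxemburg norm with respect to pointwise domination of the absolute value.

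For the sufficiency direction, assuming (I) and (II), the plan is to combine two mechanisms. First, the normalized vectors $g_i:=\chi_{F_i}/N_\Phi(\chi_{F_i})$ are unit vectors, so (II) rewrites as $\sup\{N_\Phi(C_\varphi^n g_i):i\in I,\,n\in\mathbb{N}\}=\infty$, which forces $\sup_n\|C_\varphi^n\|=\infty$; by the Banach--Steinhaus theorem this gives a residual $G_\delta$ set of vectors $h\in L^\Phi(\mu)$ with $\sup_n N_\Phi(C_\varphi^n h)=\infty$. Second, (I) ensures that every $\chi_{F_i}$---and consequently every finite linear combination $\sum_k a_k \chi_{F_{i_k}}$---satisfies $\liminf_n N_\Phi(C_\varphi^n\cdot)=0$ along $\beta_j$. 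I would then produce an irregular vector by an inductive series $f=\sum_l c_l g_{k_l}$ with indices $(i_{k_l},n_{k_l})$ extracted from (II), passing if necessary to sub-pieces of the $F_{i_{k_l}}$'s with pairwise disjoint supports so that the sharp lower estimate $N_\Phi(\sum_l a_l\chi_{A_l})\geq\max_l a_l N_\Phi(\chi_{A_l})$ valid for disjointly supported indicators lets the term corresponding to the large ratio in (II) dominate, while the summable coefficients $c_l$ are tuned (using (I) together with the submultiplicativity $\|C_\varphi^n\|\leq\|C_\varphi\|^n$) so that the series has small norm along $\beta_{j_l}$. Theorem \ref{t1.1} then yields the Li-Yorke chaos of $C_\varphi$.

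For the necessity direction, Theorem \ref{t1.1} supplies an irregular vector $f_0\in L^\Phi(\mu)$ with $\liminf_n N_\Phi(C_\varphi^n f_0)=0$ and $\limsup_n N_\Phi(C_\varphi^n f_0)=\infty$. Choose $\{\beta_j\}$ strictly increasing along which $N_\Phi(C_\varphi^{\beta_j}f_0)\to 0$, and take $F_i:=\{x:|f_0(x)|>1/i\}$ for $i$ large enough that $\mu(F_i)>0$; the bound $\mu(F_i)<\infty$ follows from $f_0\in L^\Phi(\mu)$ via a Markov-type estimate on the Luxemburg modular $\int\Phi(|f_0|/k)\,d\mu$. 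Condition (I) is then immediate from the pointwise bound $|C_\varphi^{\beta_j}f_0|\geq (1/i)\chi_{\varphi^{-\beta_j}(F_i)}$ and monotonicity of $N_\Phi$, giving $(1/i)\,N_\Phi(\chi_{\varphi^{-\beta_j}(F_i)})\leq N_\Phi(C_\varphi^{\beta_j}f_0)\to 0$. For (II) I would argue by contradiction: were the supremum in (II) some finite $M$, the dyadic decomposition $|f_0|\leq 2\sum_k 2^k\chi_{E_k}$ with $E_k:=\{2^k<|f_0|\leq 2^{k+1}\}$ (which I adjoin to $\{F_i\}$ after relabeling) combined with the factor-wise bound $N_\Phi(\chi_{\varphi^{-n}(E_k)})\leq M\,N_\Phi(\chi_{E_k})$ would yield an $n$-uniform upper bound on $N_\Phi(C_\varphi^n f_0)$, contradicting $\limsup=\infty$.

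The main technical obstacle is the interplay between (I) and (II) in both directions: in sufficiency, producing essentially disjoint sub-pieces of the $\{F_i\}$ family so that the series construction's dominant term is not cancelled by the tail; in necessity, converting the factor-wise norm comparisons into an $n$-uniform control on $N_\Phi(\sum_k 2^k\chi_{\varphi^{-n}(E_k)})$. Both issues are delicate because the theorem imposes neither the $\Delta_2$- nor the $\nabla'$-condition on $\Phi$; I expect the required estimates to reduce to a modular-level computation exploiting $\int\Phi(|f_0|/k)\,d\mu<\infty$ together with a careful choice of the family $\{F_i\}$ as suitably truncated dyadic level sets of $f_0$.
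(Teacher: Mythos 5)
Your overall architecture (Theorem \ref{t1.1} plus the identity $N_\Phi(\chi_{\varphi^{-n}(F)})=1/\Phi^{-1}(1/\mu(\varphi^{-n}(F)))$, level sets of an irregular vector for necessity, Banach--Steinhaus for sufficiency) matches the paper, but there is a genuine gap at the crux of the necessity of (II). You propose to derive an $n$-uniform bound on $N_\Phi(C_\varphi^n f_0)$ from the factor-wise bounds $N_\Phi(\chi_{\varphi^{-n}(E_k)})\le M\,N_\Phi(\chi_{E_k})$ via the dyadic decomposition $|f_0|\le 2\sum_k 2^k\chi_{E_k}$. Summing these by the triangle inequality gives $N_\Phi(C_\varphi^n f_0)\le 2M\sum_k 2^k N_\Phi(\chi_{E_k})$, and that series can diverge even though $f_0\in L^\Phi(\mu)$: membership in the Orlicz space controls $\sum_k\Phi(2^k\alpha)\mu(E_k)$, not $\sum_k 2^k/\Phi^{-1}(1/\mu(E_k))$. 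You correctly flag that the fix must be ``a modular-level computation,'' but that computation is the entire content of this step and you do not supply it. The paper carries it out by first converting the assumed bound $\Phi^{-1}(1/\mu(F_i))\le c\,\Phi^{-1}(1/\mu(\varphi^{-n}(F_i)))$ into the measure inequality $\mu(\varphi^{-n}(F_i))\le \alpha^k\mu(F_i)$ --- a conversion that uses $\Phi\in\Delta_2$ globally --- and then estimating $\int_X\Phi\bigl(f\circ\varphi^n/(2(\alpha^k+1)N_\Phi(f))\bigr)\,d\mu$ piecewise against $\int_X\Phi(f/N_\Phi(f))\,d\mu\le 1$. Without some such hypothesis on $\Phi$ the passage from a ratio bound on the $\Phi^{-1}$-values to a ratio bound on the measures is exactly what needs proof, and your sketch does not close it.

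In the sufficiency direction there is a second, smaller misstep: you apply Banach--Steinhaus to conclude that the set of vectors with unbounded orbit is residual in all of $L^\Phi(\mu)$, whereas condition (I) only gives density of the ``subsequence-to-zero'' vectors in the closed span $\mathcal{L}=\overline{\mathrm{span}}\{\chi_{F_i}\}$. A residual subset of $L^\Phi(\mu)$ can miss a proper closed subspace entirely, so the two sets cannot be intersected as stated; the paper instead applies Banach--Steinhaus to the restrictions $C_\varphi^n|_{\mathcal{L}}:\mathcal{L}\to L^\Phi(\mu)$ and uses Proposition 3 of \cite{bbmp1} to upgrade density of $\mathcal{L}_1$ in $\mathcal{L}$ to residuality, so that $\mathcal{L}_1\cap\mathcal{L}_2\neq\emptyset$ inside $\mathcal{L}$. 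Your alternative plan --- an explicit inductive series $f=\sum_l c_l g_{k_l}$ --- could in principle bypass this (positivity of the summands already gives $N_\Phi(C_\varphi^n f)\ge c_l N_\Phi(C_\varphi^n g_{k_l})$ without any disjointness), but controlling $\liminf_j N_\Phi(C_\varphi^{\beta_j}f)=0$ for the infinite tail requires a diagonal choice of the $\beta_j$'s and coefficients that you have not specified, so as written neither route to an irregular vector is complete.
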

\begin{proof}
Let the composition operator $C_{\varphi}:L^{\Phi}(\mu):\rightarrow L^{\Phi}(\mu)$ be Li-Yorke chaotic. Then by Theorem \ref{t1.1} $C_{\varphi}$ has an irregular vector which we call it $f\in L^{\Phi}(\mu)$. We put
$$F_i=\{x\in X: 2^{i-1}\leq |f(x)|<2^i\}, \ \ \ \ \ i\in \mathbb{Z}.$$
It is clear that $F_i\in \mathcal{F}$, for all $i\in \mathbb{Z}$, and $X=\cup_{i\in I}F_i$, in which $I=\{i\in \mathbb{Z}:\mu(F_i)>0\}$. Since $f\in L^{\Phi}(\mu)$, then there exists $\alpha>0$ such that $\int_X\Phi(\alpha|f(x)|)d\mu<\infty$. This implies that for each $i\in I$ we have

 $$\Phi(2^{i-1}\alpha)\mu(F_i)\leq \int_X\Phi(\alpha|f(x)|)d\mu<\infty$$
 and so $\mu(F_i)<\infty$. By definition of irregular vectors we have $\lim_{n \rightarrow \infty}\inf N_{\Phi}(C^n_{\varphi}f)=0$. Hence we can find an increasing sequence $\{\beta_j\}_{j\in \mathbb{N}}$ of positive integers such that $\lim_{j\rightarrow \infty}N_{\Phi}(C_{\varphi}^{\beta_j}f)=0$. Now we have
 \begin{align*}
 \int_X\Phi(\frac{2^{i-1}\chi_{\varphi^{-\beta_j}(F_i)}}{N_{\Phi}(C^{\beta_j}_{\varphi}f)})d\mu&=\int_{\varphi^{-\beta_j}(F_i)}\Phi(\frac{2^{i-1}\chi_{F_i}\circ \varphi^{\beta_j}}{N_{\Phi}(C^{\beta_j}_{\varphi}f)})d\mu\\
 &\leq \int_{\varphi^{-\beta_j}(F_i)}\Phi(\frac{|f|\circ \varphi^{\beta_j}}{N_{\Phi}(C^{\beta_j}_{\varphi}f)})d\mu\\
 &\leq \int_{X}\Phi(\frac{|f|\circ \varphi^{\beta_j}}{N_{\Phi}(C^{\beta_j}_{\varphi}f)})d\mu\\
 &\leq 1.
 \end{align*}
This implies that $N_{\Phi}(2^{i-1}\chi_{\varphi^{-\beta_j}(F_i)})\leq N_{\Phi}(C^{\beta_j}_{\varphi}f)$ and so
 $$2^{i-1}\lim_{j\rightarrow \infty}N_{\Phi}(\chi_{\varphi^{-\beta_j}(F_i)})\leq \lim_{j\rightarrow \infty}N_{\Phi}(C^{\beta_j}_{\varphi}f)=0.$$
 Therefore for every $i\in I$,
 $$\lim_{j\rightarrow \infty}\frac{1}{\Phi^{-1}(\frac{1}{\mu(\varphi^{-\beta_j}(F_i))})}=0$$
 and consequently
 $$\lim_{j\rightarrow \infty}\Phi^{-1}(\frac{1}{\mu(\varphi^{-\beta_j}(F_i))})=\infty.$$

 Now consider that the condition (II) does not hold, it follows that there exists a positive constant $c>0$ such that
 \begin{equation}\label{e2.1}
 \frac{\Phi^{-1}(\frac{1}{\mu(F_i)})}{\Phi^{-1}(\frac{1}{\mu(\varphi^{-n}(F_i))})}\leq c, \ \ \ \ \ \text{for all} \ \ i\in I, \ \text{and} \ \ n\in \mathbb{N}.
 \end{equation}
 Since $\Phi\in \Delta_2$ (globally), then there exists $\alpha\geq 2$ such that $\Phi(2x)\leq \alpha\Phi(x)$, for every $x>0$. Also we can find $k>0$ such that $\frac{c}{2^k}\leq 1$. Therefore we have
 \begin{equation}\label{e2.2}
 \Phi(cx)=\Phi(\frac{c}{2^k}2^kx)\leq\frac{c\alpha^k}{2^k}\Phi(x).
 \end{equation}
 Hence by combining inequalities \ref{e2.1} and \ref{e2.2} we get that
 \begin{equation}\label{e2.3}
 \mu(\varphi^{-n}(F_i))\leq\frac{c\alpha^k}{2^k}\mu(F_i)\leq\alpha^k\mu(F_i), \ \ \ \ i\in I\ \ \text{and} \ \ \ \ n\in \mathbb{N}.
 \end{equation}
 Now we have
\begin{align*}
\int_X\Phi(\frac{f\circ \varphi^n}{2(\alpha^k+1)N_{\Phi}(f)})d\mu&=\Sigma_{i\in I}\int_{\varphi^{-n}(F_i)}\Phi(\frac{f\circ \varphi^n}{2(\alpha^k+1)N_{\Phi}(f)})d\mu\\
&\leq\Sigma_{i\in I}\int_{\varphi^{-n}(F_i)}\Phi(\frac{2^i}{2(\alpha^k+1)N_{\Phi}(f)})d\mu\\
&=\Sigma_{i\in I}\Phi(\frac{2^{i-1}}{(\alpha^k+1)\|f\|_{\Phi}})\mu(\varphi^{-n}(F_i))\\
&\leq\frac{\alpha^k}{\alpha^k+1} \Sigma_{i\in I}\Phi(\frac{2^{i-1}}{N_{\Phi}(f)})\mu(F_i)\\
&\leq\Sigma_{i\in I}\int_{F_i}\Phi(\frac{2^{i-1}}{N_{\Phi}(f)})d\mu\\
&\leq \Sigma_{i\in I}\int_{F_i}\Phi(\frac{|f|}{N_{\Phi}(f)})d\mu\\
&=\int_X\Phi(\frac{f}{N_{\Phi}(f)})d\mu\leq 1.\\
\end{align*}
This implies that $N_{\Phi}(C^n_{\varphi}f)\leq2(\alpha^k+1)N_{\Phi}(f)$. This is a contradiction, because $\lim_{n\rightarrow \infty}\sup N_{\Phi}(C^n_{\varphi}f)=\infty$.\\
Conversely, let conditions (I) and (II) hold. So there exists $\{F_i\}_{i\in I}\subseteq \mathcal{F}$, with $0<\mu(F_i)<\infty$  such that
$$\lim_{j\rightarrow \infty}\frac{1}{N_{\Phi}(C^{-\beta_j}_{\varphi}\chi_{F_i})}=\lim_{j\rightarrow \infty}\frac{1}{N_{\Phi}(\chi_{\varphi^{-\beta_j}(F_i)})}=\lim_{j\rightarrow \infty}\Phi^{-1}(\frac{1}{\mu(\varphi^{-\beta_j}(F_i))})=\infty, \ \ \ \text{for all} \ \ \ i\in I,$$\\
and
 $$\sup\{\frac{N_{\Phi}(C^{n}_{\varphi}\chi_{F_i})}{N_{\Phi}(\chi_{F_i})} i\in I, \ \ n\in \mathbb{N}\}=\sup\{\frac{\Phi^{-1}(\frac{1}{\mu(F_i)})}{\Phi^{-1}(\frac{1}{\mu(\varphi^{-n}(F_i))})}\: i\in I, \ \ n\in \mathbb{N}\}=\infty,$$
 for some sequence $\{\beta_j\}_{j\in \mathbb{N}}\subseteq \mathbb{N}$.\\
 Let $$\mathcal{L}=\overline{\text{linear span}\{\chi_{F_i}:i\in I\}}\subseteq L^{\Phi}(\mu),$$

 $$O_{C_{\varphi}}(f)=\{f,C_{\varphi}f,C^2_{\varphi}f,C^3_{\varphi}f,....\},$$

 $$\mathcal{L}_1=\{f\in \mathcal{L}: O_{C_{\varphi}}(f)\ \ \ \text{has a subsequence converging to zero}\},$$
 and
 $$\mathcal{L}_2=\{f\in \mathcal{L}: O_{C_{\varphi}}(f)\ \ \ \text{is unbounded}\}.$$
 From (I) we get that $\mathcal{L}_1$ is dense in $\mathcal{L}$. Hence by the Proposition 3 of \cite{bbmp1} we get that $\mathcal{L}_1$ is residual in $\mathcal{L}$. If we let $$f_i=\Phi^{-1}(\frac{1}{\mu(F_i)})\chi_{F_i}, \ \ \ \ \ i\in I,$$
 then for every $i\in I$, $f_i\in \mathcal{L}$ and
  $$N_{\Phi}(f_i)=1\ \ \ \ N_{\Phi}(C^n_{\varphi}f_i)=\frac{\Phi^{-1}(\frac{1}{\mu(F_i)})}{\Phi^{-1}(\frac{1}{\mu(\varphi^{-n}(F_i))})}.$$
  Thus by (II) we have
  $$\sup_{n\in \mathbb{N}}\sup_{f\in \mathcal{L}}N_{\Phi}(C^n_{\varphi}f)\geq\sup_{n\in \mathbb{N},i\in I}N_{\Phi}(C^n_{\varphi}f_i)=\sup_{n\in \mathbb{N}, i\in I}\frac{\Phi^{-1}(\frac{1}{\mu(F_i)})}{\Phi^{-1}(\frac{1}{\mu(\varphi^{-n}(F_i))})}=\infty.$$
   By applying Banach-Steinhaus Theorem we get $\mathcal{L}_2$ is residual in $\mathcal{L}$. Consequently, by Theorem \ref{t1.1} we obtain that $C_{\varphi}$ is Li-Yorke chaotic, because all elements of $\mathcal{L}_1\cap \mathcal{L}_2$ are irregular vectors for $C_{\varphi}$.
\end{proof}
Let $\varphi$ be injective and $F_i=\varphi^i(F)$, for $i\in\mathbb{Z}$. For $m,n$ with $n<m$ we have

$$F_n=\varphi^n(F)=\varphi^{n-m}(\varphi^m(F))=\varphi(F_m).$$
If we set
$$I=\{n\in\mathbb{Z}:0<\mu(\varphi^n(F))<\infty\},$$
then we have
\begin{align*}
\sup\{\frac{\Phi^{-1}(\frac{1}{\mu(\varphi^m(F))})}{\Phi^{-1}(\frac{1}{\mu(\varphi^n(F))})}:n,m\in I, \ \ n<m\}&=\sup\{\frac{\Phi^{-1}(\frac{1}{\mu(F_m)})}{\Phi^{-1}(\frac{1}{\mu(\varphi^{n-m}(F_m))})}:n,m\in I, \ \ n<m\}\\
&=\sup\{\frac{\Phi^{-1}(\frac{1}{\mu(F_m)})}{\Phi^{-1}(\frac{1}{\mu(\varphi^{n-m}(F_m))})}:n,m\in I, \ \ n<m \}.
\end{align*}
Therefore similar to the Corollary 1.2 of \cite{bdp} we get that if $\varphi$ is injective, then the composition operator $C_{\varphi}$ on the Orlicz space $L^{\Phi}(\mu)$ is Li-Yorke chaotic if there exists a measurable set $F$ with $0<\mu(F)<\infty$ such that\\
\begin{itemize}
  \item $\lim_{n\rightarrow \infty}\sup \Phi^{-1}(\frac{1}{\mu(\varphi^{-n})})=\infty$,
  \item $\sup\{\frac{\Phi^{-1}(\frac{1}{\mu(F_m)})}{\Phi^{-1}(\frac{1}{\mu(\varphi^{n-m}(F_m))})}:n,m\in I, \ \ n<m \}=\infty$.
\end{itemize}
%
%
Here we have a technical lemma for later use in this paper.
\begin{lem}\label{l2.2}
Let $\varphi:X\rightarrow X$ be an injective non-singular measurable transformation and $\Phi$ be a Young's function such that $\Phi\in \Delta_2$, globally. Then the following conditions are equivalent:\\
\begin{itemize}
  \item There exists $K>0$ such that for all $F\in \mathcal{F}$,
  $$\mu(\varphi^{-1}(F))\leq K\mu(F).$$
  \item There exists $L>0$ such that for all $F\in \mathcal{F}$,
  $$\Phi^{-1}(\frac{1}{\mu(F)})\leq\Phi^{-1}(\frac{L}{\mu(\varphi^{-1}(F))})\leq L\Phi^{-1}(\frac{1}{\mu(\varphi^{-1}(F))})$$
\end{itemize}

\end{lem}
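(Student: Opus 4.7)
\textbf{Proof plan for Lemma \ref{l2.2}.} The two conditions are the $\Phi$-transforms of one another. My plan is to prove each direction separately, using monotonicity of $\Phi^{-1}$, convexity of $\Phi$, and reserving the $\Delta_2$-growth condition for the harder implication only. Throughout I rely on the identifications $\Phi(\Phi^{-1}(y)) = y$ and $\Phi^{-1}(\Phi(x)) = x$; these are valid in our setting since a globally $\Delta_2$ Young's function is necessarily everywhere finite (and in the intended setup vanishes only at zero), so the remark at the start of the paper on equality of the generalized inverse compositions applies.

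For the direction $(1) \Rightarrow (2)$ I would take $L = \max\{K, 1\}$. The first inequality is immediate: $\mu(\varphi^{-1}(F)) \leq K\mu(F) \leq L\mu(F)$ gives $1/\mu(F) \leq L/\mu(\varphi^{-1}(F))$, and $\Phi^{-1}$ is non-decreasing. For the second inequality I would use convexity: since $\Phi$ is convex with $\Phi(0)=0$, one has $\Phi(Lx) \geq L\Phi(x)$ for every $L \geq 1$ and $x \geq 0$. Taking $x = \Phi^{-1}(1/\mu(\varphi^{-1}(F)))$ yields $\Phi(L\Phi^{-1}(1/\mu(\varphi^{-1}(F)))) \geq L/\mu(\varphi^{-1}(F))$, and applying $\Phi^{-1}$ to both sides (using $\Phi^{-1}\circ\Phi=\mathrm{id}$) produces exactly the claimed bound.

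For the direction $(2) \Rightarrow (1)$ I would chain the two inequalities of (2) into
$$\Phi^{-1}\!\left(\tfrac{1}{\mu(F)}\right) \;\leq\; L\,\Phi^{-1}\!\left(\tfrac{1}{\mu(\varphi^{-1}(F))}\right),$$
then apply $\Phi$ to both sides. The left side collapses to $1/\mu(F)$. The right side is $\Phi(Ly)$ with $y = \Phi^{-1}(1/\mu(\varphi^{-1}(F)))$, and this is precisely where the global $\Delta_2$ assumption is needed: iterating $\Phi(2x) \leq \alpha\Phi(x)$ gives $\Phi(2^n x) \leq \alpha^n \Phi(x)$, and choosing $n$ with $L \leq 2^n$ yields a constant $\beta = \alpha^n$ with $\Phi(Lx) \leq \beta \Phi(x)$ for all $x \geq 0$. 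Substituting gives $1/\mu(F) \leq \beta/\mu(\varphi^{-1}(F))$, i.e., $\mu(\varphi^{-1}(F)) \leq \beta\,\mu(F)$, establishing (1).

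The only real subtlety, and the step I would watch carefully, is the clean use of the inverse identities $\Phi\circ\Phi^{-1}=\mathrm{id}=\Phi^{-1}\circ\Phi$; these should be justified once up front by invoking the hypothesis $\Phi \in \Delta_2$ (globally) together with $\Phi$ being a Young's function. Injectivity of $\varphi$ appears to play no role in the argument itself and can be regarded as inherited from the surrounding context, so the lemma in fact holds without it.
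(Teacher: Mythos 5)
Your argument is correct, and since the paper dismisses this lemma with ``It is an easy exercise,'' your write-up actually supplies the proof the paper omits; there is no competing approach to compare against, and what you do is the natural argument. Both directions check out: the convexity bound $\Phi(Lx)\geq L\Phi(x)$ for $L\geq 1$ (from $\Phi(ta)\leq t\Phi(a)$ with $\Phi(0)=0$) handles the second inequality in (2), and iterating the global $\Delta_2$ bound to get $\Phi(Lx)\leq\alpha^{n}\Phi(x)$ with $2^{n}\geq L$ is exactly what the reverse direction needs. Your justification of the inverse identities is also sound: global $\Delta_2$ together with $\lim_{x\to\infty}\Phi(x)=\infty$ forces $\Phi$ to vanish only at zero (if $\Phi(a)=0$ for some $a>0$ then $\Phi(2^{n}a)=0$ for all $n$) and to be finite everywhere, so the paper's stated criterion for $\Phi\circ\Phi^{-1}=\mathrm{id}=\Phi^{-1}\circ\Phi$ applies. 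Two minor points you could flag explicitly: the statement should really be read for $F$ with $0<\mu(F)<\infty$ and $\mu(\varphi^{-1}(F))>0$ (otherwise $1/\mu(F)$ or $1/\mu(\varphi^{-1}(F))$ is $0$ or $\infty$ and $\Phi^{-1}$ must be given a convention there), and your observation that injectivity of $\varphi$ is never used is accurate --- it is carried along only because of how the lemma is invoked later in the paper.
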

\begin{proof}
It is an easy exercise.
\end{proof}
In the following interesting result we give some necessary and sufficient conditions  for a Li-Yorke chaotic composition operator C on Orlicz spaces . We prove that some of these assertions are equivalent under extra conditions.

\begin{thm} Consider the following items:\\
\begin{enumerate}
  \item The composition operator $C_{\varphi}:L^{\Phi}(\mu)\rightarrow L^{\Phi}(\mu)$ is Li-Yorke chaotic;
  \item There exists a non-zero measurable function $f\in L^{\Phi}(\mu)$ such that
  $$\lim_{n\rightarrow \infty}\inf N_{\Phi}(C^n_{\varphi}f)=0;$$
  \item There exists a measurable set $F\in \mathcal{F}$ with $0<\mu(F)<\infty$, such that
  $$\lim_{n\rightarrow \infty}\sup\Phi^{-1}(\frac{1}{\mu(\varphi^{-n}(F))})=\infty;$$
  \item There exists a measurable set $F$ with $0<\mu(F)<\infty$ such that
  $$\lim_{n\rightarrow \infty}\sup\Phi^{-1}(\frac{1}{\mu(\varphi^{n}(F))})=\infty;$$
  \item There exists a measurable set $F\in \mathcal{F}$ with $0<\mu(F)<\infty$, such that
  $$\lim_{n\rightarrow \infty}\sup\Phi^{-1}(\frac{1}{\mu(\varphi^{-n}(F))})=\infty \ \ \ \text{and} \ \ \lim_{n\rightarrow \infty}\sup\Phi^{-1}(\frac{1}{\mu(\varphi^{n}(F))})=\infty;$$
  \item There exists a measurable set $F\in \mathcal{F}$ with $0<\mu(F)<\infty$, such that
  $$\lim_{n\rightarrow \infty}\inf\Phi^{-1}(\frac{1}{\mu(\varphi^{-n}(F))})>0 \ \ \ \text{and} \ \ \lim_{n\rightarrow \infty}\sup\Phi^{-1}(\frac{1}{\mu(\varphi^{-n}(F))})=\infty;$$
  \item There exists a measurable set $F\in \mathcal{F}$ such that $\chi_{F}$ is a semi-irregular vector for the composition operator $C_{\varphi}:L^{\Phi}(\mu)\rightarrow L^{\Phi}(\mu)$.
\end{enumerate}
Then the following implications hold:\\
$$(6)\Leftrightarrow (7)\Rightarrow(1)\Rightarrow(2)\Leftrightarrow(3)\Leftarrow(5)\Rightarrow(4).$$
Moreover, if $\varphi$ is injective and $\Phi\in \Delta_2$, then we have the implication $(5)\Rightarrow (6)$ and if $\Phi\in\Delta_2$ and $\mu(X)<\infty$, then we have the implication $(4)\Rightarrow(5)$. Consequently, if $\Phi\in \Delta_2$, $\mu(X)<\infty$ and $\varphi$ is injective, then all conditions are equivalent.
\end{thm}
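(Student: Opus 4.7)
The plan is to cascade through the implications by repeatedly exploiting Theorem~\ref{t1.1} and the identity $N_\Phi(\chi_E)=1/\Phi^{-1}(1/\mu(E))$ valid for $0<\mu(E)<\infty$. Applied to $E=\varphi^{-n}(F)$ this turns any condition on $\Phi^{-1}(1/\mu(\varphi^{-n}(F)))$ into an equivalent condition on the orbit norms $N_\Phi(C_\varphi^n\chi_F)$, so the equivalence $(6)\Leftrightarrow(7)$ is a direct transcription: the $\liminf$ and $\limsup$ parts of (6) encode boundedness and a zero subsequence of that orbit, which is exactly semi-irregularity of $\chi_F$. The chain $(7)\Rightarrow(1)\Rightarrow(2)$ is then immediate from Theorem~\ref{t1.1}, the implications $(5)\Rightarrow(3)$ and $(5)\Rightarrow(4)$ are trivial projections of the stronger hypothesis, and $(3)\Rightarrow(2)$ follows by choosing $f=\chi_F$.

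For $(2)\Rightarrow(3)$ I would recycle the slicing $F_i=\{x\in X:2^{i-1}\leq|f(x)|<2^i\}$ used in the proof of the preceding theorem. Non-vanishing of $f$ forces some $F_i$ to have positive measure, while $f\in L^\Phi(\mu)$ forces $\mu(F_i)<\infty$, and the pointwise bound $|f\circ\varphi^n|\geq 2^{i-1}\chi_{\varphi^{-n}(F_i)}$ yields $2^{i-1}N_\Phi(\chi_{\varphi^{-n}(F_i)})\leq N_\Phi(C_\varphi^n f)$. Along any subsequence on which the right-hand side tends to zero, $\Phi^{-1}(1/\mu(\varphi^{-n}(F_i)))$ tends to infinity, which is (3) with $F=F_i$.

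The real work lies in the two conditional implications. For $(4)\Rightarrow(5)$ under $\Phi\in\Delta_2$ and $\mu(X)<\infty$, I would retain the set $F$ produced by (4) and combine finiteness of $\mu(X)$ with the $\Delta_2$-based comparability of $\Phi^{-1}$ to argue that the presence of a subsequence on which $\mu(\varphi^n(F))\to 0$ forces a matching subsequence on which $\mu(\varphi^{-n}(F))\to 0$, yielding the missing $\limsup\Phi^{-1}(1/\mu(\varphi^{-n}(F)))=\infty$. For $(5)\Rightarrow(6)$ under $\varphi$ injective and $\Phi\in\Delta_2$, I would select a subsequence $m_j$ with $\mu(\varphi^{m_j}(F))\to 0$, form $F_j=\varphi^{m_j}(F)$, and use injectivity to rewrite $\varphi^{-n}(F_j)=\varphi^{m_j-n}(F)$ for $n\leq m_j$ and $\varphi^{-n}(F_j)=\varphi^{-(n-m_j)}(F)$ for $n>m_j$. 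Lemma~\ref{l2.2} then bounds the one-step ratio $\Phi^{-1}(1/\mu(\varphi^{-(k+1)}(F)))/\Phi^{-1}(1/\mu(\varphi^{-k}(F)))$ by a universal constant, so a sparse enough choice of $m_j$ should allow a single $F'$ (built from the $F_j$) to satisfy both $\liminf\Phi^{-1}(1/\mu(\varphi^{-n}(F')))>0$ and $\limsup\Phi^{-1}(1/\mu(\varphi^{-n}(F')))=\infty$. This last step is where I expect the main obstacle: producing one set that simultaneously keeps the backward orbit bounded away from the degenerate regime and still exhibits a blow-up subsequence requires reconciling the two asymmetric halves of (5) via injectivity and $\Delta_2$ in a rather delicate balancing act.
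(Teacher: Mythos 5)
Your handling of the unconditional implications matches the paper's: the translation $N_\Phi(\chi_E)=1/\Phi^{-1}(1/\mu(E))$ plus Theorem \ref{t1.1} gives $(6)\Leftrightarrow(7)\Rightarrow(1)\Rightarrow(2)$ and $(3)\Rightarrow(2)$, and for $(2)\Rightarrow(3)$ the paper uses the single super-level set $F=\{x:|f(x)|>\delta\}$ where you use the dyadic slices $F_i$ --- both work. (One caveat you share with the paper: identifying (6) with semi-irregularity of $\chi_F$ is not quite a tautology, since $\liminf_n\Phi^{-1}(1/\mu(\varphi^{-n}(F)))>0$ translates to \emph{boundedness} of the orbit of $\chi_F$, whereas semi-irregularity asks $\limsup_n N_\Phi(C_\varphi^n\chi_F)>0$, i.e.\ $\liminf_n\Phi^{-1}(1/\mu(\varphi^{-n}(F)))<\infty$; this deserves a sentence rather than the word ``exactly''.)

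The genuine gap is in the two conditional implications, which is where the entire difficulty of the theorem sits. For $(4)\Rightarrow(5)$ you assert that a subsequence with $\mu(\varphi^{n}(F))\to0$ ``forces a matching subsequence on which $\mu(\varphi^{-n}(F))\to0$'' --- for the \emph{same} $F$ this is simply not true in general, and no mechanism is offered; note that $\varphi^{-n}(\varphi^{n}(F))\supseteq F$, so small forward images control backward preimages of \emph{those images}, not of $F$, and one must construct a new set (typically a carefully chosen union of the sets $\varphi^{m_j}(F)$). For $(5)\Rightarrow(6)$ you describe such a construction but explicitly stop at ``this is where I expect the main obstacle,'' i.e.\ the balancing of $\liminf\Phi^{-1}(1/\mu(\varphi^{-n}(F')))>0$ against $\limsup=\infty$ is left unproved. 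The paper does not attempt this construction either: it uses $\Delta_2$ together with Lemma \ref{l2.2} to convert all statements about $\Phi^{-1}(1/\mu(\cdot))$ into the corresponding statements about $\mu(\cdot)$ alone, and then invokes Theorem~1.3 of \cite{bdp}, which is precisely the measure-theoretic result producing a set with $\liminf_n\mu(\varphi^{-n}(F))=0$ and $\limsup_n\mu(\varphi^{-n}(F))>0$ from the hypotheses of (4) or (5). Unless you either cite that result or carry out its combinatorial construction yourself, your proof of $(4)\Rightarrow(5)$ and $(5)\Rightarrow(6)$ --- and hence of the final equivalence of all seven conditions --- is incomplete.
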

\begin{proof}
If $C_{\varphi}$ is Li-Yorke chaotic, then by Theorem \ref{t1.1} we have a semi-irregular function $f\in L^{\Phi}(\mu)$ for $C_{\varphi}$. Hence
$$\lim_{n\rightarrow \infty}\inf N_{\Phi}(C^n_{\varphi}f)=0$$
and
\begin{equation}\label{e2.4}
\lim_{n\rightarrow \infty}\sup N_{\Phi}(C^n_{\varphi}f)=\infty.
\end{equation}
The equation \ref{e2.4} implies that $f$ is non-zero. So we proved that the implication $(1)\Rightarrow(2)$ holds.\\
Let us prove the equivalence $(2)\Leftrightarrow(3)$. If $f$ satisfies the condition $(2)$, then there exists a positive number $\delta$ such that $$\mu(F=\{x\in X: |f(x)|>\delta\})>0$$ and also
$$\int_X\Phi(\frac{\delta\chi_{\varphi^{-k}(F)}}{N_{\Phi}(C^k_{\varphi}f)})d\mu\leq\int_X\Phi(\frac{|f|^k\circ \varphi}{N_{\Phi}(C^k_{\varphi}f)})d\mu\leq 1.$$
This means that
$$\delta\frac{1}{\Phi^{-1}(\frac{1}{\mu(\varphi^{-k}(F))})}=\delta N_{\Phi}(\chi_{F}\circ \varphi^k)\leq N_{\Phi}(C^k_{\varphi}f).$$

Therefore we get that
$$\delta\lim_{k\rightarrow \infty}\inf\frac{1}{\Phi^{-1}(\frac{1}{\mu(\varphi^{-k}(F))})}\leq \lim_{k\rightarrow \infty}\inf N_{\Phi}(C^k_{\varphi}f)=0$$
and consequently we have

$$\lim_{k\rightarrow \infty}\sup\Phi^{-1}(\frac{1}{\mu(\varphi^{-k}(F))})=\infty.$$
Hence the implication $(2)\Rightarrow(3)$ holds. For the converse, if there exists a measurable set $F\in \mathcal{F}$ with $0<\mu(F)<\infty$ such that
$$\lim_{k\rightarrow \infty}\sup\Phi^{-1}(\frac{1}{\mu(\varphi^{-k}(F))})=\infty,$$
then
\begin{align*}
\lim_{k\rightarrow \infty}\inf N_{\Phi}(\chi_{F}\circ \varphi^k)&=\lim_{k\rightarrow \infty}\inf\frac{1}{\Phi^{-1}(\frac{1}{\mu(\varphi^{-k}(F))})}\\
&=\frac{1}{\lim_{k\rightarrow \infty}\sup\Phi^{-1}(\frac{1}{\mu(\varphi^{-k}(F))})}=0.
\end{align*}
So the implication $(3)\Rightarrow(2)$ holds.\\
Let us prove the equivalence $(6)\Leftrightarrow(7)$. Suppose that $f=\chi_{F}$, in which $F\in \mathcal{F}$ with $0<\mu(F)<\infty$. Then we have
$$N_{\Phi}(C^k_{\varphi}f)=N_{\Phi}(\chi_{\varphi^{-k}(F)})=\frac{1}{\Phi^{-1}(\frac{1}{\mu(\varphi^{-k}(F))})}.$$
This equation shows that the properties $(6)$ and $(7)$ are equivalent. On the other hand, the implications $(7)\Rightarrow(1)$, $(5)\Rightarrow(3)$ and $(5)\Rightarrow(4)$ are clear.\\
Suppose that there exists a measurable set $F\in \mathcal{F}$ with $0<\mu(F)<\infty$, such that
  $$\lim_{n\rightarrow \infty}\sup\Phi^{-1}(\frac{1}{\mu(\varphi^{-n}(F))})=\infty \ \ \ \text{and} \ \ \lim_{n\rightarrow \infty}\sup\Phi^{-1}(\frac{1}{\mu(\varphi^{n}(F))})=\infty.$$
Since $\Phi\in\Delta_2$  replace and is by just(continuous and increasing), then we get that
$$\lim_{n\rightarrow \infty}\sup\frac{1}{\mu(\varphi^{-n}(F))}=\infty \ \ \ \text{and} \ \ \lim_{n\rightarrow \infty}\sup\frac{1}{\mu(\varphi^{n}(F))}=\infty,$$
and so we have
$$\lim_{n\rightarrow \infty}\inf\mu(\varphi^{-n}(F))=0 \ \ \ \text{and} \ \ \lim_{n\rightarrow \infty}\inf\mu(\varphi^{n}(F))=0.$$
Hence by Lemma \ref{l2.2} and Theorem 1.3 of \cite{bdp} there exists $F\in\mathcal{F}$ with $0<\mu(F)<\infty$ such that\\
$$\lim_{n\rightarrow\infty}\inf\mu(\varphi^{-n}(F))=0 \ \ \ \ \text{and} \ \ \ \ \lim_{n\rightarrow \infty}\sup\mu(\varphi^{-n}(F))>0.$$
Now by Lemma \ref{l2.2} we get the result i.e.,\\

$$\lim_{n\rightarrow \infty}\inf\Phi^{-1}(\frac{1}{\mu(\varphi^{-n}(F))})>0 \ \ \ \text{and} \ \ \lim_{n\rightarrow \infty}\sup\Phi^{-1}(\frac{1}{\mu(\varphi^{-n}(F))})=\infty.$$
Now let the condition (3) holds. Hence there exists a measurable set $F\in \mathcal{F}$ with $0<\mu(F)<\infty$, such that
  $$\lim_{n\rightarrow \infty}\sup\Phi^{-1}(\frac{1}{\mu(\varphi^{-n}(F))})=\infty.$$

Since $\Phi\in \Delta_2$, then by Lemma \ref{l2.2} we have\\
$$\lim_{n\rightarrow \infty}\sup\frac{1}{\mu(\varphi^{-n}(F))}=\infty$$
which is equivalent to
$$\lim_{n\rightarrow \infty}\inf\mu(\varphi^{-n}(F))=0.$$
We assume that $\mu(X)<\infty$, $\Phi\in \Delta_2$ and $\varphi$ is injective so by Theorem 1.3 of \cite{bdp} we get that there exists $F\in\mathcal{F}$ with $0<\mu(F)<\infty$ such that
$$\lim_{n\rightarrow \infty}\inf\mu(\varphi^{n}(F))=0.$$
Again by Lemma \ref{l2.2} we get that
$$\lim_{n\rightarrow \infty}\sup\Phi^{-1}(\frac{1}{\mu(\varphi^{n}(F))})=\infty.$$
so the condition (4) holds.\\
Let $\Phi\in\Delta_2$ and $\mu(X)<\infty$.
y using the same process of proving  the implication $(3)\Rightarrow(4)$, Lemma \ref{l2.2} and Theorem 1.3 of \cite{bdp} we get the  implication $(4)\Rightarrow (5)$.
\end{proof}
\textbf{Declarations}\\
     \textbf{Conflict of interest.} The authors have not disclosed any competing interests.
     \textbf{Acknowledgement.} My manuscript has no associate data.

\end{document}